\def\RR{\mathbb{R}}
\newtheorem{theorem}{Theorem}[section]
\newtheorem{lemma}[theorem]{Lemma}
\newtheorem{proposition}[theorem]{Proposition}
\theoremstyle{definition}
\newtheorem{definition}[theorem]{Definition}
\newtheorem{remark}{Remark}
\newtheorem*{notation}{Notation}
\newtheorem{Assumptions}{Assumptions}
\newtheorem{Assumption}[theorem]{Assumption}
\newcommand{\mc}{\mathcal}
\title[Origin-to-destination network flow with path preferences] 
      {Origin-to-destination network flow with path preferences and velocity controls: a mean field game-like approach}
\author[Fabio Bagagiolo, Rosario Maggistro, Raffaele Pesenti]{}
\subjclass{Primary: 91A13, 49J15, 90B10, 90B20;
	 Secondary: 91A25, 91B08.
 }
 \keywords{Traffic flow optimal control, mean field games, path preference dynamics, existence of equilibrium, dynamical flow networks.}
 \email{fabio.bagagiolo@unitn.it}
 \email{rosario.maggistro@unive.it}
 \email{pesenti@unive.it}
\thanks{$^*$ Corresponding author: Rosario Maggistro}
\begin{document}
\maketitle

\centerline{\scshape Fabio Bagagiolo}
\medskip
{\footnotesize
 \centerline{Department of Mathematics}
   \centerline{Universit\`{a} di Trento}
   \centerline{ Via Sommarive, 14, I-38123 Povo, Trento, Italy}
} 

\medskip

\centerline{\scshape Rosario Maggistro$^*$ and Raffaele Pesenti}
\medskip
{\footnotesize
 \centerline{ Department of Management}
   \centerline{ Universit\`{a} Ca' Foscari Venezia}
   \centerline{	Fondamenta S. Giobbe, 873, I-30121 Cannaregio, Venezia, Italy}
}

%

\bigskip


\begin{abstract}
In this paper we consider a mean field approach to modeling the agents flow over a transportation network. In particular, beside {a standard framework of mean field games, with controlled dynamics by the agents and costs mass-distribution dependent}, we also consider a path preferences dynamics obtained as a generalization of the {so-called} noisy best response dynamics. Such a preferences dynamics says the agents choose their path having access to global information about the network congestion state and based on the observation of the decision of the agents that have preceded. {We prove the existence of a mean field equilibrium obtained as a fixed point of a map over a suitable set of time-varying mass-distributions, defined edge by edge in the network. 
We also address the case where the admissible set of controls is suitably bounded depending on the mass-distribution on the edge itself.}

	
\end{abstract}

\section{Introduction}

In this paper,  we introduce a Mean Field approach to modeling and analytically studying the agents flow over a transportation network. 

We frame our work in the literature on the flow dynamics of agents, which have become in the last decades of interest for several research communities. In the transportation area, for example, the interest towards such topics is due to the continuous growth of traffic flow as well as the spread of information and communication technologies which are changing the transportation system dynamics and affecting the users' decision making and behaviors.
Different modeling approaches have been proposed which can generally be
classified into three categories: microscopic, macroscopic and multi-scale models. 
The microscopic models or ``individual based
models", describe the crowd by giving the dynamics of each agent, usually via an ordinary differential equation and are particularly well suited for use
with small crowds. Such approach includes the cellular automaton model (see e.g., \cite{Burstedde}), the lattice gas model (see e.g. \cite{Guo}) and the social force model considered in \cite{Helbing2001}. Specifically, in \cite{Helbing2001}, the authors introduce the concept of social force to measure the internal motivation of
the individuals in performing certain movements. 
Another microscopic description is
provided in \cite{Hoogendoorn 2004}--\cite{Hoogendoorn2005}, where a theory of pedestrian route choice behavior based on the concepts of walking task and walking cost is proposed. Each pedestrian plans her movements
on the basis of some predictions she makes on the other
individuals' behavior. She makes her decisions by minimizing her individually estimated
walking cost, expressed by a functional depending on the predicted
positions of other people.

Macroscopic models, in contrast, focus on the overall behavior of pedestrian flows and are more suited to investigations of extremely large crowds, especially
when examining aspects of motion in which individual differences are less important. Such models describe the evolution of the population's density through a partial differential equation, often of transport type. In \cite{Hughes} the crowds is treated as a ``thinking fluid"  and the model is described by the continuity equation coupled with the eikonal equation. In \cite{Bellomo2008}, instead, the continuity equation is linked to the linear momentum one. Both models are based one the concepts of preferred direction of motion and discomfort at high densities.
In the framework of scalar conservation laws, a macroscopic
one-dimensional model has been proposed in \cite{Colombo2005} with the aim of describing the transition from normal to panic conditions. Finally, in \cite{PiccoloTosin2011} a new model of pedestrian flow, formulated within a measure-theoretic framework is proposed. It consists of a representation of the system via a family of measures which provide an estimate of the space occupancy by pedestrians at successive times. 

The multi-scale models use measure evolution equations for describing crowds
mixing a microscopic and a macroscopic description. In particular, the time evolving measure allow to split the density into a microscopic granular and a macroscopic continuous mass. 
These kinds of multi-scale models were  introduced quite recently for crowd and pedestrian dynamics modeling (see \cite{Crisitani2011}--\cite{CrisitaniPiccoli2014}, \cite{Rossi2013}--\cite{Rossi2018}) and enjoy the following properties.
They are able to capture some typical phenomena such as self organization.
Their different scales can be used to model the relative importance of agents in a crowd: 
for example, in a leader-follower system, leaders are described by a precise microscopic model, 
while followers are taken into account by the macroscopic part.

{In \cite{bagpes}, \cite{bafama} a mean field game approach is implemented for studying the optimal behavior of agents flowing on a network having more than one target (vertices of the networks) to be reached (visited). In \cite{CDC} an origin-destination model with path preferences dynamics as the one here presented is preliminary treated.
In the present paper, starting from the similar analysis of the different problem in \cite{bafama}, and generalizing the results in \cite{CDC},} beside the usual framing of mean field games 
(typically defined by the pair made of Hamilton-Jacobi-Bellman and mass conservation
equations), we also consider the agent's path preferences dynamics.
Specifically, we propose a model in which
the agents choose their path having access to global
information about the network congestion, but also being influenced by the decision of agents that has already made their decisions.	

Then, our model consider two dynamics:
the first one based on the mass conservation equations describes the real time evolution of the congestion level in each edge of the network;
the second one involves the evolution of the agents' path preferences. 
It is related to the agents' experiences and the available information. It evolves at a slow time scale as compared to the first one.\\
One possible physical interpretation of our model is to consider the agents as pedestrians traversing possible paths within a city described as a network. 
{However, it may also seen as well suited to describe, for example, car traffic flow in highways networks.}
In this way, the model can be 
related to two streams of literature on transportation networks.
On the one hand, {pedestrians} flows on networks have been widely analysed using the different modeling approaches cited above. As compared
to the macroscopic and multi-scale approaches (typically described by partial differential equations),
ours significantly simplifies the evolution of the traffic masses (using a balance ordinary differential equations), whereas it
highlights the role of agents route choice behavior which is typically neglected in that literature.
On the other hand, transportation networks have been studied from a decision theoretic
perspective within the framework of congestion games \cite{Beckmann}, \cite{Rosenthal}. In this framework, however, the information is available to the agents at
a single temporal and spatial scale and the mass conservation equations are
completely neglected by assuming that they are instantaneously equilibrated.
In contrast, we study a model where the
mass conservation equations are not neglected and agents route choice decisions
are affected both by the global information on the congestion and by 
the decision of the agents that have preceded entering the network.	

As already mentioned our models is based on Mean field games (MFG), whose theory goes back to the seminal 
work by Lasry-Lions \cite{LLions} (see also \cite{HCAINMAL}). This theory
includes methods and techniques to study differential games with a large
population of rational players and it is based on the assumption that the
population influences individuals' strategies through mean field
parameters. Several application domains such as economics, physics, biology
and network engineering accommodate MFG theoretical models (see
\cite{AcCamDolc}, \cite{Guent2011}, \cite{LachSalTur}--\cite{LachWolf}). In particular, models to 
study of dynamics on networks and/or pedestrian movement can be found for
example in \cite{CCMar}, \cite{CPTos}, \cite{CDeMTos}, \cite{BBMZop}.

\noindent{Beside the position of the problem, which is also rather new, the main goal of the present paper is to prove the existence of a mean field equilibrium for our framework.} 
This equilibrium is a time-varying distribution of agents ${\rho}$, 
{defined edge by edge in the network}, that generates an optimal controls vector  which, in turn, yields a path preference vector providing once again the time-varying distribution~${\rho}$. It is obtained as a fixed point of a map which satisfies the conditions of the Brouwer fixed-point theorem.  
{In out model, the controls implemented by an agent can be interpreted as the the velocities at which the agent traverses the network edges. 
Then, we also address the case where a mass-distribution dependent bound on the set of admissible controls is assumed, in order to take account of possible constraints in the velocities when edges are very congested.} 

The rest of this paper is organized as follows. 
In Section \ref{sec:2}, we describe the model and state the hypotheses used in the paper. Moreover we separately analyse all the agents' dynamics  which constitute our transportation system.
In Section \ref{sec:3}, we prove the existence of a mean-field equilibrium
and, {in Section \ref{sec:4}, we study a new mean field game problem with a constraint on the set of admissible controls.}\\
In Section \ref{sec:5}, we draw conclusions and suggests future works.
\begin{notation}
Hereinafter, in the paper we will use the following notation.

\begin{tabular}{ll}
${\mc V}$ & the finite set of vertices; \\
${\mc E}$ & the finite set of directed edges; \\
$e$ & the index of the edge;\\
$p$ &  the index of path;\\
$o$ & the origin vertex;\\
$d$ & the destination vertex;\\
	$\nu_e$ & the tail vertex of the edge $e$; \\
	$\kappa_e$ & the head vertex of the edge $e$;\\
	$\ell_e$ & the length of edge $e$; \\
	$u^e_p(\cdot)$ & {the measurable control for agents in the edge $e\in p$}; \\
	$u^e_p[t]$ & {the optimal constant control chosen at starting time $t$ for traversing $e\in p$};  \\
	$C_e$ & the {maximal mass} of agents that can enter in $e$ per unit of time;\\
	$\rho_{\max}$ & the {maximal mass} of agents that can be present at the same time in $e$;\\
	$\Gamma$ & the set of all the paths $p$ from $o$ to $d$;\\
	$A$ & the edge-path incidence matrix (see \eqref{incidencematrix}); \\
	$\Xi$ & the number of pairs $(e, p) \in {\mc E} \times \Gamma : e \in p$;\\
	$\lambda(t)$ & the total flow entering the network in the origin $o$ at time $t$ {(throughput)};\\
	${\mc S}_{\lambda(t)}$ & The simplex of a probability vector over $\Gamma$ (see \eqref{def:Slambda});\\
	$\beta$ & the fixed noise parameter;\\
	$\eta$ & the update rate of the path preferences; \\
	$L(w)$ & the Lipschitz constant of a function $w$;\\
	$\tilde L$ & the common Lipschitz constant to all the functions belong to $X$ (see \eqref{eq:X});\\
	$|.|$ & cardinality of a set, e.g.,  $|B|$ is the cardinality of set $B$;\\
	$\wedge$ & minimum operator, e.g.,  $a \wedge b = \min\{a,b\}$.	
\end{tabular}
\end{notation}

\section{Model description}\label{sec:2}

We describe the flow dynamics over a \emph{network} of possible paths that
the agents can choose to traverse within a time interval $[0,T]$,  where $T>0$ is the \emph{final horizon}.


\subsection{Network characteristics}

The network is a directed multi-graph $\mathcal{G=(V, E)}$, 
where: $\mathcal{V}$ is a finite set of vertices, generically denoted by $v$, 
and $\mathcal{E}$ is a finite set of directed edges, generically denoted by $e = (\nu_e,\kappa_e)$ 
being $\nu_e$ the tail vertex of $e$ and $\kappa_e\neq \nu_e$ the head vertex.

The {set} $\mathcal{V}$ includes two special vertices, the \emph{orign} $o$ and the \emph{destination} $d$, 
where the agents {enter and leave the network, respectively.
Each edge $e\in\mc E$ is characterized {by three finite parameters}: its \emph{length} $\ell_{e}$; its flow \emph{capacity} $C_e$, 
		expressing the maximum number of agents that can enter in $e$ per unit of time; and {\emph{maximum mass}}~$\rho_{\max}$ denoting 
		the maximum number of agents 
		that can be present at the same time in $e$. We assume $\rho_{\max}$ be the same for each $e \in \mc E$.\\
		An (oriented) \emph{path} from a vertex $v_0$ to a vertex $v_{r}$ is an ordered set of $r$ adjacent edges $p=(e_1, e_2, \ldots, e_{r})$ 
		such that $\nu_{e_1}=v_0$, $\kappa_{e_r}=v_{r}$, $v_s=\kappa_{e_{s}}=\nu_{e_{s+1}}$ for $1\le s\le r-1$, 
		and no vertex is visited twice, i.e., $v_{l}\ne v_s$ for all $0\le l<s\le r$, 
		except possibly for $v_0=v_r$, in which case the path is referred to as a \emph{cycle}.
		A vertex $v_j$ is said to be \emph{reachable} from another vertex $v_k$ if there exists at least a path from $v_k$ to $v_j$.
		
		In particular, we hold the following assumptions on the multi-graph~$\mathcal{G}$:
		\begin{itemize}
			\item $\mathcal{G}$ contains no cycles; 
			\item any vertex in $\mathcal{V}$ can be reached from the origin vertex $o$ 
			and the destination vertex $d$ is reachable from any vertex in $\mathcal{V}$.
		\end{itemize}

		We denote by $\Gamma$ the set of all the paths {$p$} from $o$ to $d$. We denote by {$A$  the $|\mc E| \times |\Gamma|$} \emph{edge-path incidence matrix} with entries 
		\begin{equation}\label{incidencematrix}
		A_{ep}=
		\begin{cases}
		1 & \text{if} \quad e \in p, \\
		0  & \text{otherwise}.
		\end{cases} 
		\end{equation}
		and by
		$$\Xi=\sum_{e \in \mc E}\sum_{p \in \Gamma}A_{ep},\quad \text{with} \quad \vert \mc E\vert \leq \Xi\leq \vert \mc E \vert \times \vert \Gamma\vert,$$
		the number of the elements equal to $1$ of the matrix $A$, {that is, the number of pairs edge-path $(e,p)\in\mc E\times\Gamma$ such that $e\in p$}.
		
		For every path $p\in\Gamma$ and edge $e\in p$, we define two functions
		\begin{equation*}
		\rho^e_p:[0, T]\to [0, \rho_{\max}], \qquad f^e_p:[0, T]\to [0, C_e],
		\end{equation*}
		which denote the current mass and current flow of agents following path $p$, respectively, present and leaving the edge $e$ at  at each time
		instant $t \in [0, T]$. We let
		\begin{equation}
		\rho(t) := \{\rho^e_p(t): e \in p,\, p\in \Gamma\} \in \RR^{\Xi},\qquad f(t) := \{f^e_p(t) : e \in p,\, p\in \Gamma \} \in \RR^{\Xi},
		\end{equation} 
		be the vectors of masses and flows, respectively.

		\bigskip

		{In order to simplify notations and statements, in this paper} we consider a  graph~$\mathcal{G}$ on which agents have only three possible paths to reach~$d$ starting from~$o$ 
		(see Figure~\ref{graphtopology}).
		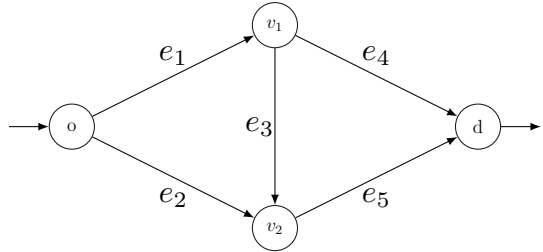
\begin{figure}[thpb]
			\centering
			\begin{tikzpicture}
			[scale=1.35,auto=left,every node/.style={circle,draw=black!90,scale=.6,fill=white!,minimum width=1cm}]
			\node (n1) at (0,0){\Large{o}};  
			\node (n2) at (2,1){\Large{$v_1$}}; 
			\node (n3) at (2,-1){\Large{$v_2$}}; 
			\node (n4) at (4,0){\Large{d}}; 
			\node [scale=0.8, auto=center,fill=none,draw=none] (n0) at (-0.8,0){};
			\node [scale=0.8, auto=center,fill=none,draw=none] (n5) at (4.8,0){};
			\foreach \from/\to in
			{n0/n1,n1/n2,n1/n3,n2/n3,n2/n4,n3/n4,n4/n5}
			\draw [-latex, right] (\from) to (\to); 
			\node [scale=2,fill=none,draw=none] (n5) at (1,0.7){$e_1$};  
			\node [scale=2,fill=none,draw=none] (n6) at (1,-0.7){$e_2$}; 
			\node [scale=2,fill=none,draw=none] (n7) at (1.85,0){$e_3$}; 
			\node [scale=2,fill=none,draw=none] (n8) at (3,0.7){$e_4$};  
			\node [scale=2,fill=none,draw=none] (n9) at (3,-0.7){$e_5$}; 
			\node [scale=1,fill=none,draw=none] (n11) at (4.1,0){};
			\node [scale=1,fill=none,draw=none] (n12) at (2,1.1){};
			\node [scale=1,fill=none,draw=none] (n13) at (2,-1.1){}; 
			\end{tikzpicture}  
			\caption{\label{graphtopology} The graph topology used in the paper.} 
		\end{figure}
		Accordingly, the set of paths is $\Gamma=\{p_1, p_2, p_3\}$, where $p_1=(e_1,e_4)$, $p_2=~(e_2, e_5)$, $p_3=(e_1, e_3, e_5)$. {However, all the results obtained in the next sections can be proved for more general networks, still satisfying the assumptions i) and ii) above.}
		
		
		\subsection{Agents' dynamics and costs}\label{agentdyn}
		
		We assume that the agents are indistinguishable. 
		Each agent enters  the network~$\mathcal{G}$ by the origin vertex, {chooses a path $p \in \Gamma$,} travels through $\mathcal{G}$ along $p$, 
		and finally leaves the network from the destination vertex. 
		
		We let
		{$\lambda:[0,T]\to[0,+\infty[$}
		be {a given} function describing the \emph{throughput} of the agents, i.e., $\lambda(t)$ is the total flow {of agents entering} the network in the origin $o$ at time~$t$.
		In addition, we let $\theta_e \in [0, \ell_e]$ be the state of the generic agent over an edge $e \in \mc E$.
		The value $\theta_e(s)$ describes the position of the agent at time $s$ from the tail of $e$, i.e., $\theta_e(s)=0$ means that the agent is in $\nu_e$, while $\theta_e(s)=\ell_e$ means that the agent is in $\kappa_e$ 
		and hence it is inside the edge $e$ as long as $0\leq\theta_e(s)\leq \ell_e$. 
		We stress that $\theta_e(s)$  describes the state of an {hypothetical} agent assumed to be in $\nu_e$ at time~$t$, 
		independently of the fact {whether there is actually someone present at $\nu_e$ at that time}.
		
		The controlled dynamics in any edge  $e \in \mc E$ of an agent who entered the edge at time $t\in[0,T]$ is:
		\begin{equation}\label{eq:thetau}
		\begin{cases}
		\dot{\theta}_e(s)=u^e(s), &s\in ]t,T],\\
		\theta_e(t)=0,
		\end{cases}
		\end{equation}
		where the control, $s\mapsto u^e(s)$, 
		is  measurable and integrable, namely $u^e \in L^1(0,T)$.

Each agent traversing an edge $e$ at a given time $t$,
aims at minimizing a cost that takes into account: 
i) the possible hassle of running in the edge to reach $d$ on time; 
ii) the pain of being entrapped in a highly congested edge; 
iii) the disappointment of not being able to reach $d$ by the final horizon~$T$.
We model this cost analytically as 
\begin{align}\label{eqcost}
J_e(t, u^e)=&\int_t^T\chi_{\{0\leq\theta_e(s)\leq \ell_{e}\}}\left(\frac{(u^e(s))^2}{2}+\varphi_e\left(\sum_{{\hat p\in\Gamma|e\in\hat p}}\rho^e_{\hat{p}}(s)\right)\right)\,ds\nonumber \\  + 
&\chi_{\{0\leq \theta_e(T)< \ell_e\}}\alpha\sum_{j\in p_e} \ell_{j},
\end{align}
where  $\chi$ is the characteristic function
	\[
	\chi_{\{0\leq\theta_e(s)\leq \ell_{e}\}}=
	\left\{
	\begin{array}{ll}
	\displaystyle
	1&\mbox{if } 0\le\theta_e(s)\le\ell_e,\\
	\displaystyle
	0&\mbox{otherwise},
	\end{array}
	\right.
	\]
	
	\noindent
	and similarly for $\chi_{\{0\leq \theta_e(T)< \ell_e\}}$;
$\alpha>0$ is a constant parameter representing a cost per unit of length, and  $p_e$ is the shortest path from {the tail} $\nu_e$ to $d$.
The quadratic term inside the integral in~(\ref{eqcost}) stands for the cost component i), 
while the other term, characterized  by the congestion function 
$$\varphi_e:[0, \rho_{\max}]\to [0, +\infty[, $$ 
stands for the congestion cost component.
Finally, the last addendum in~(\ref{eqcost})  stands for cost component iii). {In particular, note that, due to the presence of the characteristic functions, the integral part is paid as long as the agent stays on the edge $e$.  The cost outside the integral acts as follows: 1) if at the final horizon $T$ the agent is still in between the edge (not reached the head $\kappa_e$ yet), then the final paid cost is the minimum distance in the graph from the tail $\nu_e$ of the actual edge to the destination $d$; 2) if at the final horizon $T$ the agent is at the head of the edge $\kappa_e$ (i.e. it has already traversed the whole edge), then the corresponding paid cost with respect to the actual edge $e$ is zero. Anyway it will be paid as the minimum distance in the graph from the head vertex $\kappa_e$ to the destination $d$ just by interpreting that head as the tail $\nu_{e'}$ of any other subsequent edge $e'$ hypothetically entered by the agent at time $T$.}

Throughout this paper we will assume the following basic assumptions to hold on the agents' behavior:
\begin{Assumptions}\label{ass:H123}~
	\begin{enumerate}
		\item\label{ass:H1} The throughput $\lambda$ is $C^1([0,T])$ and {$\lambda(t)>0$ for all $t\in[0,T]$. In particular, this implies that there exist $0<\underline\lambda\le\overline\lambda<+\infty$ such that $\underline\lambda\le\lambda(t)\le\overline\lambda$ for all $t\in[0,T]$.}
		\item\label{ass:H2} The initial mass of agents is null, i.e., $\rho(0)=0$.
		\item\label{ass:H3} {For every $e\in\mc E$, the congestion cost function $\varphi_e$ is 
			Lipschitz continuous. Moreover it only depends on the masses $\rho^e_p$ and not on the state variable $\theta_e$.}
		\item\label{ass:H4} The network edges' maximum mass is such that  $\rho_{\max} > \overline\lambda T \geq \int_{0}^{T} \lambda(s) ds$ {and the flow capacity $C_e > \overline\lambda,\, \forall e \in \mc E$, i.e.,
			neither the mass capacity nor the flow capacity of the edges can impede the agents' movements even in the worst case scenario.} 
		\item\label{ass:H5} When more than one optimal control is available, agents choose the smallest one.
		
		\item\label{ass:H6} {Agents have a bounded rationality in the sense that, even when they access to the full available information, 
			the cognitive limitations of their minds, and the finite amount of time they have  prevent them from using the pieces of information to their full extent when making their decisions.}
	\end{enumerate}
\end{Assumptions}
We remark that
Assumption~\ref{ass:H123}.\ref{ass:H2} means that no one is around the network at $t=0$, while
Assumption~\ref{ass:H123}.\ref{ass:H3} implies that all agents in the same edge at the same instant equally suffer the same congestion. Moreover, Assumptions~\ref{ass:H123}.\ref{ass:H1}--\ref{ass:H123}.\ref{ass:H3} imply the boundedness of $\varphi_e$, for all $e \in \mc E$.\\
The simplifying Assumption~\ref{ass:H123}.\ref{ass:H4} will be partially dropped and discussed in the Section~\ref{sec:4}.\\
	Assumptions~\ref{ass:H123}.\ref{ass:H5} and \ref{ass:H123}.\ref{ass:H6} models the human behavior of the agents. Assumptions~\ref{ass:H123}.\ref{ass:H5} implies that agents, when they can choose, prefer to consume less energy than more,
	e.g. they prefer to {move} slower than faster. {In particular, this is implemented in formula (\ref{eq:optimal_controls}), and some other consideration on flow density may also justify it.}  Assumptions~\ref{ass:H123}.\ref{ass:H6} understands that agents typically
 have limited capabilities of
	forecasting the evolution of a dynamic system and of optimizing their decisions. 
	The consequence of this assumption are detailed in the rest of this subsection. Specifically, it will used both in the definition 
	of the agents' aggregate path preference and in the computation of the agents flows~\eqref{flusso}. 

We assume that agents entering the network 
have access to the global information about the current congestion status of the network {through the knowledge of the actual mass vector $\rho$. 
	Then, they choose the path to follow on the basis of their appraisal of the costs of the different paths and on the observation  of the decision of the agents that have preceded.}
Next, we formally introduce this concept.

The relative appeal of the different paths to the agents is modeled by a
time-varying nonnegative \emph{(aggregate) path preference} vector $z:[0,T] \rightarrow \RR_{+}^{\vert\Gamma\vert}$, 
whose generic element $z_p(t)$ represents the {flow's density} of agents entering path $p$ {at the origin $o$} at time $t$.
The vector $z$ varies within the simplex
\begin{equation}\label{def:Slambda}
\mc{S}_{\lambda(t)}=\left\{z\in\RR_{+}^{\vert\Gamma\vert}:\,\sum_{p \in \Gamma }z_p(t)=\lambda(t)\right\},
\end{equation}
where we recall that by $\lambda(t)$ we denote the agents' throughput at time $t$.

The path preference~$z(t)$ evolves over time as a function of the appraisal of the costs that the agents would pay 
along the different paths. The agents assess these costs in terms of the controls that they would implement 
and assuming known the congestion level described by $\rho$.  
Specifically, the cost of each path $p \in \Gamma$ at time $t$ is:
\begin{equation}\label{costoinp}
J^p(t)=\sum_{e \in \mc E: e \in p} J_e(t_e^p(t),u^e_p) ,
\end{equation}
{where, for every $e \in p$,  $u^e_p \in L^1(0,T)$ is the optimal controls implemented  along the edges by an agents who is in the path $p$ (these controls are discussed in the following subsection); $t_e^p(t)$ is the time instant in which an agent, arriving in $t$ in the origin $o$ and following the path~$p$, reaches $\nu_e$ {using the controls $u^e_p$}.{ We write  $t_e^p(t)= \infty$ if an agent does not reach $e$ within $T$ and we define $J_e(\infty,u^e_p) = 0$.} 
	
	We also assume that information on the congestion of the network provided to the agents may be inexact, so that 
	they assess a path~$p$ having a minimum cost 
	with probability $e^{-\beta J^p(t)}/\sum_{\hat p \in \Gamma}e^{-\beta J^{\hat p}(t)}$, where $\beta >0$ is a fixed noise parameter.  
	Hence, the fraction of agents entering the network at time $t$ that would consider a path $p$ having minimum cost is
	\begin{equation}\label{bestresponse}
	F^p_{\beta}(t)
	=\lambda(t)\frac{e^{-\beta J^p(t)}}{\sum_{\hat p \in \Gamma}e^{-\beta J^{\hat p}(t)}}.
	\end{equation}
	
	\noindent
	{Note that,  when $\beta$ tends to~0, then
		$F^p_{\beta}(t)$ tends to $\lambda(t)/|\Gamma|$}, that is, agents consider all the paths equivalent.
	Differently, when $\beta$ tends to infinite 
	the agents have the possibility of {surely} determining the exact costs of the paths and indeed $F^p_{\beta}(t)$ tends to $0$ for all $p$, 
	except for the path minimum cost,  for which it tends to $\lambda(t)$.
	\\
	Hereinafter, we denote by $F_{\beta}(t)$ the vector $\{F^p_{\beta}(t): p \in \Gamma\}$ and by $J(t)=\{J^p(t): p \in \Gamma\}$ the vector of costs on all the paths $p \in \Gamma$.
	
	Agents make their final decision on the path to choose comparing the 
	value of $F_{\beta}(t)$ with the choice of the agents that have preceded them. Specifically, we assume that they correct the difference 
	$z(t) - F_{\beta}(t)$ with a proportional control, as  described by the following equation:
	\begin{equation}\label{evolpi}
	\dot{z}(t) - \dot{F}_{\beta}(t) = - \eta\Big(z(t) - F_{\beta}(t)\Big),\ \ z(0)=z_0,
	\end{equation}
	where, the parameter $\eta>0$ can be interpreted as the rate at which the 
	path preferences are updated. 
	In other words, equation~\eqref{evolpi} says that the bounded rationality of the agents makes them, on the one
	side, like the idea to split as indicated by $F_{\beta}$; on the other side, prefer not to stray from previous agents' decisions.
	We remark that the dynamics described by~\eqref{evolpi} makes $z(t)$ satisfies constraint \eqref{def:Slambda} for all $t \in]0,T]$, whenever the same happens for $z_0$.
	
	\begin{remark}
		Equation~\eqref{evolpi} can be seen as a generalization of the so called \emph{noisy best response dynamics} (see e.g., \cite{Como,ComoMagg}) and such generalization is needed because of the non-constancy of $\lambda$.  While with the noisy best response dynamics, the agents update their path preference comparing the difference between the noisy best response function and their current path preference, in \eqref{evolpi} the agents acts in a way to control the error between the answer to the global information about the actual congestion status and the path preferences of agents who previously entered the network.
		Another possible generalization of the noisy best response dynamics, when $\lambda$ varies over time, is the one given in \cite{CDC}. 
	\end{remark}
The path preference $z$ turns then useful, as in \cite{ComoMagg}, to define, for every $t\in[0,T]$ the \textit{local decision function} $G{[t]}:~\mc {S}_{\lambda(t)}\to\RR_+^{\Xi}$, 
which characterizes the fractions of agents choosing each outward directed edge $e \in p,\, p\in \Gamma$ when traversing a non destination vertex $v$. Actually, in this paper, we are interesting only on the first three component of this functions, $(e_1,p_1), (e_1,p_3), (e_2,p_2)$, which are relative to the two edges $e_1, e_2$ outgoing from the origin $o$ (see Figure~\ref{graphtopology}). We restrict our attention to these three components since once the path is chosen in the origin, in the following non-destination vertices the agents get split according such a {choice}.\\
Hence, we define the first three component of $G{[t]}$ and fix the others equal to zero as follows:
\begin{equation}\label{localchoice}
G{[t]}^{e}_p(z)=
\begin{cases}
\displaystyle\frac{z_p}{\sum_{\hat{p} \in \Gamma} z_{\hat{p}}} & \text{for}\quad e\in \{e_1,e_2\},\, p\ni e,
\\
\,
0 &  \text{for}\quad e \in \{e_3, e_4, e_5\},\, p\ni e.
\end{cases}
\end{equation}
Note that in \eqref{localchoice}, for every $t\in[0,T]$ and for every $z\in\mc{S}_{\lambda(t)}$, it is $\sum_{\hat{p} \in \Gamma}z_{\hat{p}}=\lambda(t){\ge\underline\lambda} >0$, because of (\ref{def:Slambda}) and Assumption~\ref{ass:H123}.\ref{ass:H1}. Hence, for every $t\in[0,T]$, $G[t]$ is a continuous function defined over the compact set $S_{{\lambda}(t)} $, and so uniformly continuous.
Definition~\eqref{localchoice} allows to write the equation that describes mass conservation, for every non-destination vertex $v$ and outward directed edge $e \in p,\, p \in \Gamma$ , as:
\begin{equation}\label{sistemaccoppiato}
\dot{\rho}(t)=H(f(t), z(t); t)\,,\qquad \rho(0)=\rho_0,
\end{equation}
where {the flow $t\mapsto f(t)=(f^e_p(t))^e_p\in\left(\prod_{e\in p}[0,C_e]\right)_p$ is defined next, $t\mapsto z(t)=(z_p(t))_p\in\mc{S}_{\lambda(t)}$ is the solution of (\ref{evolpi}), and $H:\prod_{e\in p}[0,C_e]\times \mc{S}_{\lambda(t)}\to \RR^\Xi$  
	is defined, for every $t\in[0,T]$, by
	\begin{equation}\label{H}
	H^e_p(f(t), z(t); t):= \bigg(\lambda(t)G[t]^{e}_p(z(t))+ f_p^{prec_p(e)}(t)\bigg)-f_p^e(t)\,,\ \forall\  p\in \Gamma,\ e\in p,
	\end{equation}
	\noindent
	with $prec_p(e)$ the function that returns the edge that precedes $e$ on the path $p$. 
	Each component $f_p^e(t)$ of the flow $f(t)$ represents the outgoing flow from the edge $e$ at time $t$.
{Given Assumption~\ref{ass:H123}.\ref{ass:H6}, agents assess the outgoing flow assuming a constant traverse time $k \in ]0,T]$ for each edge $e \in \mc E$. 
			Specifically, $k$ is what the agents assess as the maximum time such that for any $t \in [T- k, T]$ the optimal control $u^e_p(t)$ is certainly null.
			In other words, for $t \in [T- k, T]$, the agents think that it is not convenient to traverse the edge, as the cost of
			running in the edge to reach $d$ at $T$ is for sure greater than the cost of the disappointment of not being able to reach $d$. 
			{Actually, such a value $k>0$ can be a-priori evaluated by the data of the problem.}}  
		Then, we write the outgoing flows as:
		\begin{subequations}\label{flusso} 
			\begin{align}
			f_p^e(t) &= 
			\begin{cases}
			0 &\, \text{if}\, t\in [0,k],\\
			\lambda(t-k)G[t-k]^{e}_p(z(t-k))sign(u_p^e[t-k]) &\, \text{if}\,  t\in [k,T],\\ 
			 \text{for}\,	e\in \{e_1,e_2\},\, p\ni e,
			\end{cases}\\
			f_p^e(t)&=\begin{cases}
			0 &\, \text{if}\, t\in [0,k],\\
			f_p^{prec_p(e)}(t-k)sign(u_p^e[t-k]) &\, \text{if}\,  t\in [k,T],\\
		\text{for}\,	e\in \{e_3,e_4,e_5\},\, p\ni e,
			\end{cases} 
			\end{align}
		\end{subequations}
		where $u^e_p[t-k]\ge0$ 
		is the constant optimal control implemented by an agent that, following path $p$, enters the edge $e$ at time $t-k$, 
		{and $sign(\xi)=1$ if $\xi>0$ and $sign(\xi)=0$ if $\xi=0$.}

{
\begin{remark}
\label{rmrk:flows}
Conditions \eqref{flusso}, coherently with Assumption~\ref{ass:H123}.\ref{ass:H6}, model the outgoing flow $f^e_p(t)$ as possibly
		estimated by an agent entering $e$ at time $t-k$
		that assumes that all the other agents that are currently present on $e$ and that are following the same path $p$, 
		are implementing the same controls $u^e_p[t-k]$, as itself. Of course, a more precise formulation of them should consider the actual value of the control (and not only their sign) and estimate the real traverse time (something similar in this direction is made in \cite{bafama}). Similarly, the mass $\rho$ that satisfies \eqref{sistemaccoppiato} may be more precisely defined in order to represent the real dynamics of the agents. Anyway, such estimated flows and mass evolution may be also seen as an approximation for the elaboration in real time of the information that a possible network manager has to implement in order to send them to the agents. The study of the real discrepancy of such estimated flows and mass evolution form the actual ones may be the subject of future works.

However, note that the estimated flows $f^e_p$, when implemented in ~\eqref{sistemaccoppiato}, make the principle of mass concentration satisfied that is, for example when $\rho_0\equiv 0$, the actual total mass present in the networks is the mass entered through the origin:

\[
\int_0^t\rho(s)ds=\int_0^t\lambda(s)ds\ \forall\ t\in[0,T].
\]

\noindent
Moreover, by \eqref{sistemaccoppiato}--\eqref{flusso}, and by Assumption~\ref{ass:H123}.\ref{ass:H1}, we have that any solution $\rho$ of (\ref{sistemaccoppiato}) is Lipschitz continuous with Lispchitz constant $L=3\overline\lambda$, independently on the optimal control $u$, on the initial value $\rho_0$, and the costs $J$.

	
Finally, let us observe that \eqref{sistemaccoppiato}--\eqref{flusso}
	do not preclude the possibility that agents accumulate at the beginning of an edge~$e$, i.e., on the vertex $\nu_{e}$. This situation may occur, when the optimal control is ${u}^e_p = 0$, since  the corresponding outflow $f^e_p=0$.
\end{remark}
}
	
\subsection{Value functions and optimal controls}\label{VfOC}

	Given a vector mass concentration $\rho(\cdot)$, for each $p\in\Gamma$, $e\in p$ and $t\in[0,T]$, we define the following quantities, representing the optimum that an agent, following path $p$ and entering edge $e$ at time $t$, may get $\forall\,p \in \Gamma$: 
	\begin{equation}\label{Valuefunc1}
	V^e_p(t)=\begin{cases}
	\inf\limits_{\substack{u^e_p \in L^1(0,T)}}
	\Biggl\{ \displaystyle\int_t^{T\wedge\tau} 
	\Biggl(\frac{(u^e_p(s))^2}{2}+\varphi_e\Biggl(\sum_{{\hat p\in\Gamma|e\in\hat p}}\rho^e_{\hat{p}}(s)\Biggr)\Biggr)\,ds+ {\mc F}^e_p(T\wedge \tau)\Biggr\}\\  \text{if }  e \in p\setminus \{last(p)\}, \\ 
	\inf\limits_{\substack{u^e_p \in L^1(0,T)}} \{J_e(t, u^e_p)\} \quad \text{\text{if }}  e = last(p), 
	\end{cases} 
	\end{equation}
	where
	$\tau$ is the first exit time from the closed interval $[0,\ell_e]$, $last(p)$ is a function 
	that returns the last edge of a path $p$ and ${\mc F}^e_p(T\wedge \tau)$ is given by
	\begin{equation}\label{exitcost}
	{\mc F}^e_p(T\wedge \tau)=
	\begin{cases}
	V_p^{succ_p(e)}(\tau) & \text{if } \tau<T,\\
	\displaystyle\alpha\sum_{j\in p_e} \ell_{j} & \text{if } \tau>T,\\
	\displaystyle\min\Bigl\{\alpha\sum_{j\in p_e} \ell_{j},  V_p^{succ_p(e)}(\tau)\Bigr\} & \text{if } \tau=T,
	\end{cases}
	\end{equation}
	with $succ_p(e)$ the function which returns the edge that follows $e$ on path $p$, for $e \in p \setminus \{last(p)\}$.
	%
	
	\noindent{The quantities in (\ref{Valuefunc1}) are recursively and backwardly defined, starting from the ones corresponding to the last edges ending in the destination vertex $d$. We call them, with a little abuse of terminology, \emph{value functions}.
		Note that such a recursive definition is valid as the absence of oriented cycles in the network $\mc G$ prevents self-referring.}
	{The value functions will turn useful in the next section, where we identify a mean field equilibrium. 
		There, instead of considering the standard Hamilton-Jacobi-Bellman equations,
		we will write, as in \cite{bafama}, equivalent conditions in terms of the value functions due to the presence of a discontinuous final cost.}
	\noindent
	{The value functions (\ref{Valuefunc1})} do not depend on the position $\theta_e$ of the agents on the edge {$e\in p$},
	{because, as we are going to show, the optimal behavior of the agents is, for any traversed edge, to implement a constant control $u^e_p\ge0$ chosen when they enter the edges. The main reason for that is the fact that the congestion functions $\varphi_e$ depend on the total mass actually present in the edge and not on the state position of the single agent. Indeed,} consider an agent that, {in an edge $e$}, moves from the {tail} $\nu_{e}$ at time~$t'$  and reaches the vertex $\kappa_e$ at time~$t''$. {Moreover, as we are going to do in the next section, 
		we can suppose the mass concentration $\rho$ as given.}
	The component {$\int_{t'}^{t''} \varphi_{e}(\sum_{\hat p \in \Gamma |e \in \hat p}\rho^{e}_{\hat p}(s))ds$} of the cost~\eqref{eqcost} {can be then assumed as given, whenever the agent in $\nu_e$ at time $t'$ decides to reach $\kappa_e$ at time $t''$.}
	
	\medskip
	Let us now enumerate some facts that, under our hypotheses, hold for the optimal behavior of the agents.
	\begin{enumerate}[label=\roman*)]
\item {When $t''$ is chosen {(which means that the agent has decided to traverse the edge)}, the agent has only to minimize  the component\\ $\frac{1}{2}\int_{t'}^{t''} (u^e_p(s))^2ds$ of the cost $J_e$ in \eqref{Valuefunc1}, and this happens when the control is chosen constant and equal to the constant value $u^e_p=\frac{\ell_{e}}{t''-t'}$. Also note that, with such a choice, in (\ref{Valuefunc1}), it is $\tau=t''$.}
	\item {The previous point i) also} excludes the possibility that an optimally
behaving agent remains at $\nu_{e}$ (i.e, chooses $u^e_p = 0$) for a positive time interval and {then} moves later{; or, similarly, that it stops and stay still in a intermediate point of the edge for a positive time interval;} or that 
the agent goes back and forth along edge $e$. Hence, an optimal control $u^e_p$ is always {constant and} non-negative. 
\item {From the previous points i)--ii), similarly arguing as in \cite{bafama}, we get that optimally behaving} agents cannot accumulate on points strictly internal to the edge, and moreover they also cannot get over each other along the edge because it is impossible that two optimally behaving agents, moving from $\nu_e$ at time $t'_1<t'_2$ respectively, reaches $\kappa_e$ at time $t''_2<t''_1$, respectively. {These facts come from dynamic programming arguments, taking account that any control which is not constant when crossing the edge cannot be otpimal.}
{In particular, this also implies that whenever at time $t$ an optimal choice is $u_p^e=0$ (i.e. to not move) for which the arrival time is $+\infty$, then $u^e_p=0$ will be the unique optimal control for all subsequent instants $t'\ge t$ and hence there will be no controls' multiplicity from this $t$ onwards.}
\item For an agent in $\nu_e$ such that $\kappa_e=d$ (i.e. it stands on the tail of the last edge of the chosen path $p$), it is certainly not optimal to reach $d$ before $T$ and wait there for a positive time length as, in any case, that agent would pay the congestion costs in~$d$ for this interval {(see the cost (\ref{eqcost})}.
\item The following situation is instead possible only for $t''=T$: two optimally behaving agents, moving from $\nu_e$ at time $t'_1<t'_2$ respectively, reaches $\kappa_e$ at the same time $t''$. {Indeed, since the optimal control is necessarily constant (point i)), then any agent that at the time $t$ starts to traverse an edge $e$ as part of a path $p$, has only to optimally choose the arrival time $\tau$ to the vertex of the edge and implement the constant control $u^e_p\equiv \ell_e/(\tau-t)$ (see the terms minimized over $\tau$ in (\ref{eq:V2}), (\ref{eq:V11}), (\ref{eq:V22})). Hence if $\tau=t''<T$, then being $\tau=t''$ a minimizing value internal to $]t,T[$, differentiating and imposing the derivatives equal to zero, we get a contradiction. In \cite{bafama} (Appendix A point 1) the case when the value functions $V$ are not derivable is also treated.}

Note that in this case an accumulation of agents (Dirac mass) may appear in $\kappa_e$, but the time $t''=T$ is the final horizon and hence the game is immediately over and that Dirac mass does not flow.
\item What is instead formally possible is that  for an agent moving from $\nu_e$ at time $t'$,  the choices of reaching $\kappa_e$ at two times $t_1''<t_2''$ are both optimal. In this case, for similar considerations as before, only agents entering the edge at time $t'$ may reach $\kappa_e$ at a time $t''\in[t''_1,t''_2]$. Hence, in the interval $[t''_1,t''_2]$, actually no density of agents arrives and by virtue of similar reasonings to those made in \cite{bafama} (Appendix A point 2) we can
assume, without restriction, that the agents moving from $\nu_e$ at time $t'$ all arrive in $\kappa_e$ at time $t''_2$. More generally, in accordance with Assumption~\ref{ass:H123}.\ref{ass:H5}, for every $t\in[0,T]$, for every edge $e$ and path $p$ containing $e$, we define 
\begin{equation*}
\tau_{e,p}^*(t)=\max\left\{\tau\in]t,T]: u_p^e\equiv\frac{\ell_e}{\tau-t}\ \mbox{is optimal }\right\}
\end{equation*}
\noindent
and then, without restriction, we assume that the optimal control implemented by an agent that starts to traverse the edge $e$ as part of the path $p$ is
\begin{equation}
\label{eq:optimal_controls}
u^e_p\equiv\frac{\ell_e}{\tau^*_{e,p}(t)-t},
\end{equation}
\noindent
when $\left\{\tau\in]t,T]: u_p^e\equiv\frac{\ell_e}{\tau-t}\ \mbox{is optimal }\right\}\neq\emptyset$, and $u^e_p\equiv0$ otherwise (which may corresponds to $\tau^*_{e,p}(t)=+\infty$.
\end{enumerate}

\begin{remark}
\label{rmrk:arrival_time}
By the previous points i)--vi), the function $t\mapsto\tau^*_{e,p}(t)$, whenever it is finite, is increasing. Hence it is continuous almost everywhere and moreover, if $t$ is a continuity point, then $\tau^*_{e,p}(t)$ is the unique possible optimal arrival time.
\end{remark}

{Hereinafter, we denote by $$u=\{u^e_p : e \in p, \, p \in \Gamma,\ u_p^e\ge0\},$$
the controls' vector, and by $u_p^e[t]$ we will denote the optimal constant control chosen by an agent that stands in $\nu_e$ at time $t$ when following the path $p$. Moreover, we do not display the argument $\sum_{\hat{p} \in \Gamma |e \in \hat p}\rho^{e}_{\hat{p}}$ of $\varphi_{e}$, whenever it is not strictly necessary.}

Consider now an agent standing at $\nu_{e}$ {at time $t<T$, and hence at $\theta_e(t)=0$, where $\kappa_e=d$, i.e (looking to the Figure~\ref{graphtopology} )} for the pairs $(e,p) \in \{(e_4,p_1), (e_5,p_2), (e_5,p_3)\}$.
It has two possible choices: either staying at $\nu_{e}$ indefinitely or moving to reach $\kappa_{e}=d$ exactly at time~$T$.
Accordingly, the candidate {constant} optimal controls {to be chosen at the time $t$} are
\begin{equation}\label{oc2}
u^{e}_{p,1}{[t]\equiv}0, \qquad u^{e}_{p,2}{[t]\equiv}\frac{\ell_{e}}{T-t}.
\end{equation} 
Hence, given the cost functional (\ref{eqcost}), we derive
\begin{equation}\label{eq:V1}
V^{e}_p(t)=\min\left\{ \alpha\ell_{e},\,\frac{1}{2}\frac{(\ell_{e})^{2}}{T-t}\right\} +\int_{t}^{T}\varphi_{e}\,ds.
\end{equation}
An agent standing at $\nu_{e_3}$ at time $t\in[0,T]$ has two possible
choices: staying in $\nu_{e_3}$ or moving to reach $\kappa_{e_3}$ at {some (optimal) instants} $\tau \in ]t, T]$. 
Hence, we obtain that the agent has to choose between the following {two kinds of candidate constant optimal controls}:
\begin{equation}\label{oc1}
u^{e_3}_{p_3,1}[t]\equiv0,\qquad \ u^{e_3}_{p_3,2}[t]\equiv\frac{\ell_{e_3}}{\tau-t},
\end{equation}
whose associated value function is:
\begin{equation}
\label{eq:V2}
V^{e_3}_{p_3}(t)= 
\min\left\{ \alpha\left(\ell_{e_3}+\ell_{e_5}\right)+\int_{t}^{T }\varphi_{e_3}\,ds,\inf_{\tau\in ]t,T]}  \left\{\frac{1}{2}\frac{(\ell_{e_3})^{2}}{\tau-t}+\int_{t}^{\tau }\varphi_{e_3}\,ds+V^{e_5}_{p_3}(\tau)\right\}\right\}
\end{equation}

An agent standing at $\nu_{e_1}$ at time $t$ and following a path $p \in \{p_1,p_3\}$ may choose between staying in $\nu_{e_1}$ or reaching $\kappa_{e_1}$ at a certain $\tau \in ] t,T]$. 
Hence, the candidate {constant} optimal controls are:
\begin{equation}\label{oc4}
u^{e_1}_{p,1}[t]\equiv0,  \qquad  u^{e_1}_{p,2}[t]\equiv\frac{\ell_{e_1}}{\tau-t}
\end{equation}
whose associated value functions are:
\begin{subequations}\label{eq:V11}
	\begin{align}\label{eq:V11a}
\hspace{-1cm}&V^{e_1}_{p_1}(t)=\min\left\{ \alpha \Big( \ell_{e_1}+\ell_{e_4} \Big)+\int_{t}^{T}\varphi_{e_1}\,ds,
	\inf_{\tau\in ]t,T]}\left\{\frac{1}{2}\frac{(\ell_{e_1})^{2}}{\tau -t}+\int_{t}^{\tau }\varphi_{e_1}\,ds+ V_{p_1}^{e_4}(\tau)\right\}\right\}	\\
	\label{eq:V11b}& V^{e_1}_{p3}(t)=\min\Bigg\{\alpha \Big( \ell_{e_1}+\ell_{e_3}+\ell_{e_5}  \Big)+ \int_{t}^{T}\varphi_{e_1}\,ds,\\ 
	&\hspace{6cm}\inf_{\tau\in ]t,T]}\left\{\frac{1}{2}\frac{(\ell_{e_1})^{2}}{\tau -t}+\int_{t}^{\tau }\varphi_{e_1}\,ds+ V_{p_3}^{e_3}(\tau)\right\}\Bigg\}.\nonumber	
	\end{align}
\end{subequations}

Analogous arguments hold for computing $V^{e_2}_{p_2}(t)$ when an agent is standing at $\nu_{e_2}$.
The candidate {constant} optimal controls are
\begin{equation}\label{oc3}
u_{p_2,1}^{e_2}[t]\equiv0, \qquad u_{p_2,2}^{e_2}[t]\equiv\frac{\ell_{e_2}}{\tau-t},
\end{equation}
whose associated value function is:
\begin{equation}\label{eq:V22}
V^{e_2}_{p_2}(t)=\min\Bigg\{\alpha(\ell_{e_2}+\ell_{e_5})+\int_{t}^{T}\varphi_{e_2}\,ds,
\inf_{\tau\in ]t,T]}\left\{\frac{1}{2}\frac{(\ell_{e_2})^{2}}{\tau -t}+\int_{t}^{\tau }\varphi_{e_2}\,ds+ V^{e_5}_{p_2}(\tau)\}\right\}\Bigg\}.
\end{equation}

\begin{remark}
	\label{rmrk:valuefunctions}
	We remark that, the  optimal controls described in (\ref{oc1}), (\ref{oc4}), (\ref{oc3})
	are detected,  along with the {possible arrival time $\tau$, by} the minimization process carried on in  (\ref{eq:V2}), (\ref{eq:V11}), (\ref{eq:V22}).
	Also, when $\rho$ is given, the construction of the optimal controls may be  performed  backwardly, 
	starting from the problem~(\ref{eq:V1}). {Also note that, the minimization processes in $\tau$ are admissible because of the coercivity of the minimizing term when $\tau\to t^+$.} 
\end{remark}

We now give in the following a result of Lipschitz continuity of the value functions defined above that will turn useful in the next section.
\begin{proposition} \label{th:valuefun} Suppose that $\rho$ is given continuous and that Assumptions~\ref{ass:H123} hold. 
	Then, every value function $V^e_p :[0,T] \to \mathbb{R}$, for all $e \in p,\, p\in \Gamma$ defined by (\ref{eq:V1})-(\ref{eq:V22})
	is: Lipschitz continuous, with Lipschitz constant independent of {$\rho$};  
	bounded independently on $\rho$;
	continuous with respect to the mass density $\rho$ (via the congestion functions $\varphi$), 
	i.e, whenever $\rho_n\to\rho$ uniformly, then $V_{p,n}^e\to V_p^e$ uniformly in $[0,T]$.
\end{proposition}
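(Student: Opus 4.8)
The plan is to argue by backward induction along each path $p\in\Gamma$, establishing the three claims for $V^e_p$ starting from the last edge (where $\kappa_e=d$) and proceeding, edge by edge, towards the origin. This is legitimate precisely because $\mathcal G$ is acyclic, so the recursion defining the value functions never refers back to itself. Throughout I set $M:=\max_{e\in\mc E}\sup_{[0,\rho_{\max}]}\varphi_e<+\infty$, which is finite and independent of $\rho$ by Assumptions~\ref{ass:H123}.\ref{ass:H1}--\ref{ass:H123}.\ref{ass:H3}, and I repeatedly use that for fixed $e$ the map $(t,\tau)\mapsto\int_t^\tau\varphi_e\,ds$ is Lipschitz in each endpoint with constant $\le M$ and bounded by $MT$. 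For the base case (\ref{eq:V1}), namely $V^e_p(t)=\min\{\alpha\ell_e,\tfrac12\ell_e^2/(T-t)\}+\int_t^T\varphi_e\,ds$, boundedness ($0\le V^e_p\le\alpha\ell_e+MT$) is immediate; the only threat to Lipschitz continuity is the blow-up of $\tfrac12\ell_e^2/(T-t)$ as $t\to T$, but on the set where this term realises the minimum one has $\tfrac12\ell_e^2/(T-t)\le\alpha\ell_e$, i.e. $T-t\ge\ell_e/(2\alpha)$, so its derivative is bounded by $2\alpha^2$ there and vanishes elsewhere, giving a Lipschitz constant depending only on $\alpha,\ell_e$. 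Continuity in $\rho$ is clear since the $\min$ term does not involve $\rho$ while $\int_t^T\varphi_e\,ds$ converges uniformly whenever $\varphi_e$ does, which it does because $\varphi_e$ is Lipschitz and $\rho_n\to\rho$ uniformly.

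For the inductive step I write, as in (\ref{eq:V2}), (\ref{eq:V11}), (\ref{eq:V22}), $V^e_p=\min\{g,h\}$, where the stay-cost is $g(t)=\alpha D+\int_t^T\varphi_e\,ds$ ($D$ the relevant residual distance to $d$) and the move-cost is $h(t)=\inf_{\tau\in]t,T]}m(t,\tau)$ with $m(t,\tau)=\tfrac12\ell_e^2/(\tau-t)+\int_t^\tau\varphi_e\,ds+w(\tau)$, and $w:=V^{succ_p(e)}$ is, by the inductive hypothesis, bounded, Lipschitz, and continuous in $\rho$. Since $w\ge0$ one gets $0\le V^e_p\le g\le\alpha D+MT=:B$, which gives boundedness independent of $\rho$. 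Coercivity of $m(t,\cdot)$ as $\tau\to t^+$ (see Remark~\ref{rmrk:valuefunctions}) together with continuity of $w$ guarantees the infimum is attained at some $\tau^*(t)$. The crucial observation, which tames the blow-up, is a uniform minimal travel time: whenever moving is optimal, i.e. $h(t)\le g(t)$, then $\tfrac12\ell_e^2/(\tau^*(t)-t)\le m(t,\tau^*(t))=h(t)\le B$, whence $\tau^*(t)-t\ge\ell_e^2/(2B)=:\delta>0$.

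To obtain the Lipschitz bound I take $t_1<t_2$ with $t_2-t_1\le\delta/2$ and compare the two problems by reusing at one point the optimal arrival time of the other. If staying is optimal at the later (resp. earlier) instant, the monotonicity and $M$-Lipschitz property of $g$ give the estimate at once. If moving with arrival $\tau$ is optimal at some instant $t$, then $\tau-t\ge\delta$, hence also $\tau-t'\ge\delta/2$ for the companion instant $t'$; plugging $\tau$ into $m(\cdot,\tau)$ at $t'$ and noting that in the difference $m(t',\tau)-m(t,\tau)$ the term $w(\tau)$ cancels, only $\tfrac12\ell_e^2\big(\tfrac1{\tau-t'}-\tfrac1{\tau-t}\big)+\int\varphi_e\,ds$ survives, which is bounded by $(\ell_e^2/\delta^2+M)|t_1-t_2|$. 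For $|t_1-t_2|>\delta/2$ the bound $0\le V^e_p\le B$ already yields a Lipschitz estimate with constant $2B/\delta$. The decisive point is the cancellation of $w(\tau)$: the Lipschitz constant of $V^e_p$ does not inherit that of $w$ and hence does not accumulate along the induction, so one may fix a single $\tilde L$ depending only on $\alpha,M,T$ and the edge lengths, thus independent of $\rho$.

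Finally, continuity in $\rho$ at the inductive step follows from the non-expansiveness of $\min$ and $\inf$: writing $\varphi_{e,n}$ for the congestion along $\rho_n$, one has $\|g_n-g\|_\infty\le T\|\varphi_{e,n}-\varphi_e\|_\infty$ and $\|h_n-h\|_\infty\le T\|\varphi_{e,n}-\varphi_e\|_\infty+\|w_n-w\|_\infty$, both tending to $0$ by the Lipschitz continuity of $\varphi_e$ and the inductive hypothesis $w_n\to w$ uniformly; hence $\|V^e_{p,n}-V^e_p\|_\infty\le\max\{\|g_n-g\|_\infty,\|h_n-h\|_\infty\}\to0$. The main obstacle in the whole argument is exactly the blow-up of the travelling cost near $t=T$, and it is overcome by the uniform lower bound $\delta$ on the travel time that holds whenever motion is optimal; everything else reduces to routine estimates stable under $\min$ and $\inf$.
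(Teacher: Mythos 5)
Your proof is correct and follows essentially the same strategy as the paper's: backward induction along each path (justified by acyclicity), a uniform positive lower bound on the optimal travel time to neutralize the blow-up of $\tfrac12\ell_e^2/(\tau-t)$ near $t=T$, and a comparison of the two minimization problems at $t_1$ and $t_2$ obtained by reusing the optimal arrival time of one as a competitor for the other. Two details of your execution differ from, and slightly streamline, the paper's argument. First, you derive the minimal travel time $\delta=\ell_e^2/(2B)$ explicitly from the a priori bound $B$ on the value function and keep the minimization over all of $]t,T]$, so the reused $\tau$ is always admissible for the companion problem and the term $w(\tau)$ cancels in every case; your Lipschitz constant therefore does not involve $L(w)$, whereas the paper restricts $\tau$ to $[t+h,T]$ and its third case ($t_1+h\le\tau_1<t_2+h$) invokes the Lipschitz continuity of $\psi^{t}$ in $\tau$, so its constant does inherit $L(V^{succ_p(e)}_p)$ and grows (harmlessly, the network being finite) along the induction. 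Second, your continuity in $\rho$ via the non-expansiveness of $\min$ and $\inf$, giving the quantitative estimate $\Vert V^e_{p,n}-V^e_p\Vert_\infty\le T\Vert\varphi_{e,n}-\varphi_e\Vert_\infty+\Vert w_n-w\Vert_\infty$, is more direct than the paper's route (pointwise convergence upgraded to uniform convergence through equiboundedness and equi-Lipschitz continuity). Both arguments establish all three assertions of the proposition.
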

\begin{proof} 
Assumptions~\ref{ass:H123} implies that $\sum_{\hat{p} \in \Gamma | e \in \hat p}\rho^{e}_{\hat{p}}{\le\rho_{\max}}$ is bounded
, {independently from controls, paths and edges}, 
then there exists a positive constant $k_1$ {such that, for every $0\le t_1\le t_2\le T$, it always holds:}
\begin{equation}\label{lip}
\left\vert\int_{t_1}^{t_2} \varphi_e\left(\sum_{\hat{p} \in \Gamma | e \in \hat p}\rho^{e}_{\hat{p}}(s)\right)\,ds\right\vert \leq
\left\Vert\varphi_e\left(\sum_{\hat{p} \in \Gamma | e \in \hat p}\rho^{e}_{\hat{p}}(\cdot)\right)\right\Vert_{\infty }\hspace{-0.2cm}\left\vert t_2
-t_1\right\vert \le k_{1}\left\vert t_2 -t_2\right\vert \leq k_{1}T.
\end{equation}

\noindent
Now, take $e$ as the last edge of the path $p$ (i.e. looking to see Figure~\ref{graphtopology},  $(e,p) \in \{(e_4,p_1), (e_5,p_2), (e_5,p_3)\}$), and consider $V^e_p$ as defined in (\ref{eq:V1}). It is evident that it is of the form
\begin{equation*}
V^e_p(t)=\frac{1}{2}\frac{(\ell_e)^2}{T-t}+\int_t^T\varphi_e ds,
\end{equation*}

\noindent
only if $T-t\ge\ell_e/(2\alpha)$, that is $t\le T-\ell_e/(2\alpha)\le T-h$ with $h>0$ independent on $p$ and its last edge $e$, on $\rho$ and on controls. Using also (\ref{lip}), we then get the Lipschitz continuity of all value functions $V_p^e$ in (\ref{eq:V1}), with the same Lipschitz constant. We also easily get the equiboundedness of those $V_p^e$.

 Proceeding backwards, {let us consider} $V^{e_3}_{p_3}(t)$ given by (\ref{eq:V2}). We concentrate on the term minimized with respect to $\tau\in]t,T]$ in (\ref{eq:V2}). Again, as before (see also Remark \ref{rmrk:valuefunctions}), there exists $h>0$ independent on $\rho$, on controls and on $t\in[0,T]$ such that, for any $t$, whenever $V_{p_3}^{e_3}(t)$ is defined as that minimized term, then the minimizing values $\tau$ belong to $[t+h,T]$ (and $V_{p_3}^{e_3}(t)$ is certainly defined as the other term in the exterior minimization in (\ref{eq:V2}) when $t+h>T$). Hence, for every $t$, we consider the function
\begin{equation*}
\psi^t:[t+h,T]\to\mathbb{R},\ \tau\mapsto \frac{1}{2}\frac{(\ell_{e_3})^2}{\tau-t}+\int_t^\tau\varphi_{e_3}ds+V_{p_3}^{e_5}(\tau).
\end{equation*}
	\noindent
	Note that $\psi^t$ is Lipschitz continuous for every $t$, with Lipschitz constant $M>0$ independent on $t$ and on $\rho$ (because so is $V_{p_3}^{e_5}$ from previous considerations). For $0\le t_1<t_2\le T$, and for $\tau\in[t_2+h,T]$, we get (see also (\ref{lip})), again for $M>0$ independent from all,
\begin{equation*}
\begin{split}	
	\left|\psi^{t_1}(\tau)-\psi^{t_2}(\tau)\right|&\le\frac{1}{2}\left|\frac{(\ell_{e_3})^2}{\tau-t_1}-\frac{(\ell_{e_3})^2}{\tau-t_2}\right|+\int_{t_1}^{t_2}\varphi_{e_3}ds\\
	 &\le\frac{1}{2}\frac{(\ell_{e_3})^2}{h^2}|t_1-t_2|+k_1|t_1-t_2|=M|t_1-t_2|.
	\end{split}
	\end{equation*}
	
	\noindent
	Let $\tau_1,\tau_2$ be two points of minimum for $\psi^{t_1}$ and $\psi^{t_2}$ respectively. We get
	\begin{equation*}
	\psi^{t_1}(\tau_1)-\psi^{t_2}(\tau_2)\le\psi^{t_1}(\tau_2)-\psi^{t_2}(\tau_2)\le M|t_1-t_2|.
\end{equation*}
	\noindent
	If $\tau_1\ge t_2+h$, we then similarly get
	\begin{equation*}
	\psi^{t_2}(\tau_2)-\psi^{t_1}(\tau_1)\le\psi^{t_2}(\tau_1)-\psi^{t_1}(\tau_1)\le M|t_1-t_2|.
	\end{equation*}
	
	\noindent
	If instead, $t_1+h\le\tau_1<t_2+h$, then we get
		\begin{equation*}
	\psi^{t_2}(\tau_2)-\psi^{t_1}(\tau_1)=\psi^{t_2}(\tau_2)\pm\psi^{t_2}(t_2+h)\pm\psi^{t_1}(t_2+h)-\psi^{t_1}(\tau_1)\le 2M|t_1-t_2|.
	\end{equation*}
	
	\noindent
	We then get the Lipschitz continuity of $V_{p_3}^{e_3}$ in (\ref{eq:V2}), with Lipschitz constant independent on $\rho$. 

Arguing similarly, {in a backward manner}, one proves the Lipschitz continuity of the value functions in \eqref{eq:V11} and \eqref{eq:V22}, with Lipschitz constant independent on $\rho$.

{Now, still proceeding backwardly, for a uniformly convergent sequence of mass densities $\rho_n\to\rho$, we easily get that the corresponding value functions in (\ref{eq:V1}) uniformly converge. From this, we obtain that the corresponding value functions in (\ref{eq:V2}) and (\ref{eq:V11}) pointwise converge. But they are also equibounded and equi-Lipschitz and so uniformly converge. We conclude proceeding backwardly i this way.
} 
\end{proof}

%

\begin{remark}\label{noteLip}
	An immediate consequence of Proposition~\ref{th:valuefun} and of Assumption~\ref{ass:H123}.\ref{ass:H1} is that 
	$F^p_{\beta}(t)$ defined in \eqref{bestresponse} is Lipschitz and equi-Lipschitz continuous.
	Indeed,  $F^p_{\beta}(t)$ is built considering the cost $J^p$ in \eqref{costoinp} which is the ``sum" of the value functions $V_p^e$ \eqref{Valuefunc1}  that by Proposition \ref{th:valuefun} are Lipschitz and equi-Lipschitz continuous.
\end{remark}

\section{Existence of a mean field equilibrium}\label{sec:3} 
In this section we prove the existence of a mean field equilibrium for $\rho$ over the considered network $\mathcal{G}$.
Specifically, we proceed as follows.\\
First, we let $L(w)$ be the Lipschitz constant of a function~$w$ and we  choose
as a space to search for a fixed point: 
\begin{equation}\label{eq:X}
X=\left\{w:[0,T]\to[0,\rho_{\max}]:  {L(w)\le\tilde L},\, |w|\le \rho_{\max} \right\}^\Xi,
\end{equation}
the Cartesian product $\Xi$ times of the space of  Lipschitzian functions with Lipschitz constant not greater than $\tilde L$  
and overall bounded by $\rho_{\max}$, where $\tilde L$ is a constant. 
Space $X$ is convex and compact with respect to the uniform topology.\\
Then, fixed the noisy parameter $\beta>0$, we search for a fixed point of the function 
$\psi:~X\to~X$,  with $\rho\mapsto \rho^\prime =\psi(\rho)$ where $\rho^\prime$ is obtained performing the following steps (see diagram in Fig.~\ref{puntofisso}):

\begin{enumerate}[label=\roman*)]
	\item given the mass $\rho$ the optimal control $u$ is derived through (\ref{eq:V1})-(\ref{eq:V22});
	\item the optimal control $u$ is used both to compute the flow vector $f$ through \eqref{flusso} and to obtain the path preference vector $z$ through~\eqref{evolpi} by first computing the vector of costs $J$;
	\item the mass vector~$\rho^\prime$ is derived from~$f$ and~$z$ through~\eqref{sistemaccoppiato} by first computing the vectors~$G$ through~\eqref{localchoice} and~$H$  through~\eqref{H}.
\end{enumerate}
\begin{figure}[thpb]
	\centering
	\begin{tikzpicture}
	[scale=1.4,auto=left,every node/.style={circle,draw=black!90,scale=.5,fill=white,minimum width=.5cm}]
	\node [scale=2, auto=center,fill=none,draw=none] (n0) at (0,0){$\rho$};
	\node [scale=2, auto=center,fill=none,draw=none] (n1) at (1,0){$u$};
	\node [scale=2, auto=center,fill=none,draw=none] (n2) at (2.2,0){$J$};
	\node [scale=2, auto=center,fill=none,draw=none] (n3) at (3.4,0){$z$};
	\node [scale=2, auto=center,fill=none,draw=none] (n4) at (4.5,0){$G(z)$};
	\node [scale=2, auto=center,fill=none,draw=none] (n5) at (6,0){$H(f,z)$};
	\node [scale=2, auto=center,fill=none,draw=none] (n6) at (7.2,0){$\rho'$};
	\node [scale=2, auto=center,fill=none,draw=none] (n7) at (3.4,-0.8){$f$};	
	\foreach \from/\to in
	{n0/n1,n1/n7,n1/n2,n2/n3,n3/n4,n4/n5,n7/n5,n5/n6}
	\draw [-latex, right] (\from) to (\to); 
	\end{tikzpicture} 
	\caption{\label{puntofisso} Fixed point scheme.}
\end{figure}
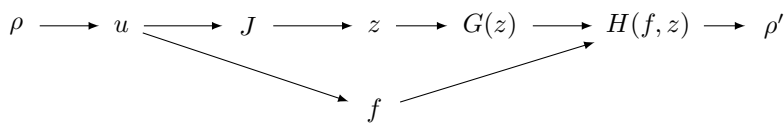
{Note that a suitable constant $\tilde L$ exists such that the function $\psi$ maps $X$ into itself. Indeed, 
note that, by construction, $\psi(\rho)$ must satisfy (\ref{sistemaccoppiato}) and hence, by Remark \ref{rmrk:flows} and  Assumption~\ref{ass:H123}.\ref{ass:H4}, the bound $|\rho|\le\rho_{max}$ is satisfied and, as Lipschitz constant we can take $\tilde L=3\overline\lambda$.
}

{
\begin{definition}\label{def:MFGE}
		Let $\psi$ the function described above. Then a
		{mean field equilibrium} is a total mass $\rho\in X$ that satisfies $\rho = \psi(\rho)$.	
\end{definition}

Now we show that the function $\psi$ is continuous so that Brouwer fixed-point theorem can be applied and a mean field equilibrium exists.}
	
\begin{lemma}\label{lem:2} The function $\psi:X\to X$ is continuous.
\end{lemma}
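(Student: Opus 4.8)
The plan is to exploit that $\psi$ is built by composing the elementary maps of Figure~\ref{puntofisso}. From $\rho$ one obtains the value functions $V$, hence the optimal controls $u$ and the path costs $J$; from $J$ one obtains the response $F_{\beta}$ and, through~\eqref{evolpi}, the preference $z$; the flow $f$ is then assembled from $u$ (through the factor $sign(u^e_p[\cdot])$) and from $z$ (through $G(z)$), and finally $\rho'$ is produced by the mass equation~\eqref{sistemaccoppiato}. I would show that every map in this chain is continuous in the uniform topology, \emph{except} $\rho\mapsto f$, which is only continuous into $L^1(0,T)$ because of the discontinuous factor $sign(u^e_p[\cdot])$. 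This weaker continuity will suffice precisely because $\rho'$ is obtained by integrating $f$.

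First I would dispose of the smooth part of the chain. Fix $\rho_n\to\rho$ uniformly in $X$. By Proposition~\ref{th:valuefun} the value functions converge uniformly; since the optimal total cost $J^p$ of each path coincides with the value function of its first edge (the recursion~\eqref{Valuefunc1} telescopes exactly into the stage costs summed in~\eqref{costoinp}, cf. Remark~\ref{noteLip}), the costs satisfy $J_n\to J$ uniformly. The map $J\mapsto F_{\beta}$ in~\eqref{bestresponse} is the softmax, smooth and with denominator bounded away from $0$, so $F_{\beta,n}\to F_{\beta}$ uniformly. Solving the linear ODE~\eqref{evolpi} explicitly by setting $w=z-F_{\beta}$ gives $z(t)=F_{\beta}(t)+(z_0-F_{\beta}(0))e^{-\eta t}$, whence $z_n\to z$ uniformly; and by~\eqref{localchoice}, together with $\sum_{\hat p}z_{\hat p}=\lambda\ge\underline\lambda>0$ (Assumption~\ref{ass:H123}.\ref{ass:H1}), the local decisions $G(z_n)\to G(z)$ uniformly as well.

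The core of the argument is the behaviour of the sign factor. For each pair $(e,p)$, point~iii) of Section~\ref{VfOC} shows that once $u^e_p=0$ is optimal it stays \emph{uniquely} optimal thereafter; hence there is a single threshold time $t^*(\rho)$ with $sign(u^e_p[t])=1$ for $t<t^*(\rho)$ and $sign(u^e_p[t])=0$ for $t>t^*(\rho)$, the ``stay'' and ``move'' costs being \emph{strictly} ordered off $t^*(\rho)$. Since both costs are continuous in $\rho$ by Proposition~\ref{th:valuefun}, these strict inequalities persist along $\rho_n$, so $sign(u^e_{p,n}[t])\to sign(u^e_p[t])$ for every $t\neq t^*(\rho)$. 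The finitely many thresholds form a null set, so the sign factors converge almost everywhere. Combining this with the uniform convergence of $z_n$, and arguing by induction along each path for the recursive edges $e_3,e_4,e_5$ in~\eqref{flusso} — using that $g_n\to g$ in $L^1$ and $h_n\to h$ a.e.\ with $(h_n)$ uniformly bounded imply $g_nh_n\to gh$ in $L^1$ — dominated convergence yields $f_n\to f$ in $L^1(0,T)$. Note that only the \emph{sign} of $u$ enters~\eqref{flusso}, so the possible discontinuity of the arrival time $\tau^*_{e,p}$ in $\rho$ is irrelevant here.

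It then remains to integrate. Writing $\rho'(t)=\rho_0+\int_0^t H(f(s),z(s);s)\,ds$ with $H$ affine in $(f,z)$ by~\eqref{H}, one gets a bound of the form $\sup_{t\in[0,T]}|\rho'_n(t)-\rho'(t)|\le T\,\|z_n-z\|_\infty+C\,\|f_n-f\|_{L^1(0,T)}\to0$, so $\rho'_n\to\rho'$ uniformly and $\psi$ is continuous on $X$. I expect the main obstacle to be exactly the control of the sign factor: one must verify that the ``stay versus move'' comparison is strict away from the threshold and that the uniform continuity of the value functions in $\rho$ transfers this strictness to nearby masses, confining every possible failure of pointwise convergence to a null set where it is absorbed by the integration.
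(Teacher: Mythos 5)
Your proof is correct and follows the same overall architecture as the paper's: decompose $\psi$ along the chain of Figure~\ref{puntofisso}, get uniform convergence of $V$, $J$, $F_\beta$, $z$ and $G(z)$, reduce the flow step to a.e.\ convergence of the sign factors plus dominated convergence in $L^1(0,T)$, and conclude by integrating \eqref{sistemaccoppiato}. The one place where you genuinely diverge is the crux, namely the sign factor. The paper first proves a.e.\ convergence of the optimal controls themselves: for fixed $t$ it extracts a convergent subsequence of the constants $u^n[t]$, identifies the limit as an optimal control for the limit mass, and uses the monotonicity of $t\mapsto\tau^*_{e,p}(t)$ (Remark~\ref{rmrk:arrival_time}) to get uniqueness of the limit at a.e.\ $t$; it then converts convergence of controls into convergence of signs via the uniform lower bound $\ell_e/T$ on every positive optimal control. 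You instead bypass the controls entirely and work directly with the ``stay versus move'' dichotomy: point~iii) of Section~\ref{VfOC} gives the monotone threshold structure of the set where $u^e_p=0$ is optimal, the two branch costs are strictly ordered off the single threshold $t^*(\rho)$, and strict inequalities between quantities continuous in $\rho$ persist along $\rho_n$, so the signs converge pointwise off a finite set. Both routes are sound; yours is more economical for this lemma (you correctly observe that only $sign(u)$ enters \eqref{flusso}, so the possible discontinuity of $\tau^*$ in $\rho$ never needs to be confronted), while the paper's version additionally delivers $u^n[\cdot]\to u[\cdot]$ in $L^1$, which is the more robust statement if one wanted to refine \eqref{flusso} to depend on the actual values of the controls. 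The only points you should make explicit are (a) that the ``move'' branch $\inf_{\tau}\{\cdot\}$ is itself continuous under uniform convergence of $\rho_n$ (this follows from the same estimates on $\psi^t$ used in the proof of Proposition~\ref{th:valuefun}, since the minimizing $\tau$ stays in $[t+h,T]$ and the infimum is attained), and (b) that past the threshold the strictness of the ordering comes from the \emph{unique} optimality of $u=0$ asserted in point~iii); with these spelled out the argument is complete.
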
 
\begin{proof} 
We show that for every sequence $\{\rho^n\} \subset X$ and for every $\rho \in X$ such that $\rho^n\to \rho$ {uniformly}, we get $\psi(\rho^n)\to \psi(\rho)$ {uniformly}.
We divide the proof into several steps.

(1)  Consider the value functions $V^{e,n}_p$ and $V^e_p$, for every $e \in p,\, p\in \Gamma$ defined  by (\ref{eq:V1}),(\ref{eq:V2}),(\ref{eq:V11}),(\ref{eq:V22})  
and associated, respectively, to the  choices of masses $\rho^n$ and $\rho$ in the congestion cost vector
$\varphi=\{\varphi_e: e \in \mathcal{E}\}$, where each component $\varphi_e$ has as argument the corresponding component of  $\rho^n$ and $\rho$, respectively. 
By {Proposition \ref{th:valuefun} since $\rho^n\to\rho$ uniformly, then 
$V^{e,n}_p\to V^e_p$ $\forall\,e \in p$ uniformly in $[0,T]$}.
 {

For every fixed $t$, let $u^n[t]$ and $u[t]$ be the corresponding constant optimal controls for traversing at time $t$ a given edge $e$ in a given path $p$ (here not displayed), with the corresponding optimal arrival time $\tau^*_n(t)$, $\tau^*(t)$ (see (\ref{eq:optimal_controls})). By compactness, there exists a real number $u^t$ such that, at least for a subsequence, $u^n[t]\to u^t$. By the convergence of the value functions, and consequently of the minimizing expressions (\ref{eq:V1}),(\ref{eq:V2}),(\ref{eq:V11}),(\ref{eq:V22}), we have that the constant $u^t$ is an optimal constant control for traversing, at time $t$, the edge $e$, as part of the path $p$, with the given limit mass $\rho$. By Remark \ref{rmrk:arrival_time}, if $t$ is a continuity point of $\tau^*(\cdot)$, then the only optimal control for the limit problem is $u[t]\equiv\frac{\ell_e}{\tau^*(t)-t}$, and hence the limit is independent from the subsequence. Again by Remark \ref{rmrk:arrival_time}, we then get that the sequence of optimal control functions $u^n[\cdot]$ almost everywhere converges to the limit optimal control $u[\cdot]$. By the dominated convergence theorem they then converge in $L^1(0,T)$. 
}

\medskip

(2) Consider the functions  $F^{p, n}_{\beta}$ and  $F^p_{\beta}$ for every $p \in \Gamma$ defined by \eqref{bestresponse} and associated, respectively, to the optimal controls $u^n[\cdot]$ and $u[\cdot]$ introduced in the point (1).
By Remark \ref{noteLip} and 
since  $V^{e,n}_p\to V^e_p$ $\forall\,e \in p$ uniformly, it follows that $F^{p,n}_{\beta} \to F^p_{\beta}(t)\, \forall\,p \in \Gamma$ uniformly. Let now $\{z^n\}$ and $z$ be the sequence of path preference vectors and the path preference vector induced, respectively, by $F^n_{\beta}$ and  $F_{\beta}$ through \eqref{evolpi}. Note that  the sequence $z^n$ is equi-bounded and equi-Lipschitz continuous (since $z^n$, $F^n_{\beta}$ and $\dot{F}^n_{\beta}$ are bounded), hence, there exists $\widetilde{z}$ such that, at least along a subsequence, $z^n\to \widetilde{z}$. Now using \eqref{evolpi} for both $z^n$ and $\widetilde{z}$ we get
\begin{subequations}
	\begin{align}
	\label{1eq} z^n(t) &=z^n(0)-\eta \Big(\int_0^t z^n(s)\,ds - \int_0^t F^n_{\beta}(s)\,ds\Big)+ F^n_{\beta}(t) - F^n_{\beta}(0),\\
	\label{2eq} \widetilde{z}(t) &= \widetilde{z}(0) - \eta \Big(\int_0^t \widetilde{z}(s)\,ds - \int_0^t F_{\beta}(s)\,ds\Big)+ F_{\beta}(t) - F_{\beta}(0).
	\end{align}
\end{subequations}
From the above considerations follows that the right hand side of \eqref{1eq} converges to the right hand side of \eqref{2eq}. Hence, by the uniqueness of the solution of \eqref{evolpi} one gets that $\widetilde{z}(t)=z(t)\, \forall\, t \in [0, T]$ and $z^n(t)\to {z(t)}$.  Say that, since the function $G[t]$ is uniformly continuous then $G[t](z^n(t))$ converges to $G[t](z(t))$.	

\medskip

(3) Taking into account the optimal controls {$u^n[\cdot]$ and $u[\cdot]$ introduced in the point (1) such that $u^n[\cdot]\to u[\cdot]$ in $L^1(0,T)$ and almost everywhere,} and given the throughput $\lambda$ for every $t \in [0, T]$, we can compute the corresponding flows $f^n$ and $f$ as in \eqref{flusso}. We now want to prove that $f^n\to f$ {in $L^1(0,T)$} for which it is enough to show that {$sign(u^n[\cdot])\to sign(u[\cdot])$ in $L^1(0,T)$}.\\
By the optimization procedure \eqref{oc2}--\eqref{eq:V22} follows that,
each agent when enters an edge $e$ decides either to stop or 
to keep a constant control strictly grater than zero, which 
allows the agent to reach the other extreme of the edge within time $T$.
Then, any control  $u[\cdot] >0$ is lower bounded by a constant $\frac{\ell_e}{T} > 0$ (for every edge $e$ in a given path $p$).
As a consequence if $u^{n}[\cdot]\to u[\cdot] > 0$, we have $u[\cdot] \geq \frac{\ell_e}{T} > 0$.{ Hence, $sign(u^{n}[\cdot])\to sign(u[\cdot])=1$.\\
Differently, if $u^{n}[\cdot]\to u[\cdot]=0$, by the limit definition follows that from a certain $n$ onward $u^{n}[\cdot] < \frac{\ell_e}{T}$ and hence, by its optimality, $u^{n}[\cdot]=0$ which in turn implies that $sign(u^{n}[\cdot])\to sign(u[\cdot])=0$.  
Therefore we have proven the {almost everywhere convergence of signs from which, by the dominated convergence, their convergence in $L^1(0,T)$, and hence the one of the flows.}}

\medskip

    Then we can compute {(edge by edge)} $\psi(\rho^n)$ and $\psi(\rho)$ integrating the mass conservation \eqref{sistemaccoppiato}:
\begin{subequations}
	\begin{align}
	\label{Im1}\psi(\rho_n(t))&= \rho^n(0)+\int_0^t \left(\lambda(s)G[s](z^n(s))+f^{prec, n}(s)\right)\,ds -\int_0^t f^n(s)\,ds;\\
	\label{Im2}\psi(\rho(t))&= \rho(0)+\int_0^t \left(\lambda(s)G[s](z(s))+f^{prec}(s)\right)-\int_0^t f(s)\,ds.
	\end{align}
\end{subequations}
Now, using all the previous arguments in the points (1), (2) and (3) we get that the right hand side of \eqref{Im1} converges to the right hand side of \eqref{Im2}, 
{from which $\psi(\rho_n(t))\to \psi(\rho(t))$ for every $t\in[0,T]$, and also uniformly, being them equibounded and equi-Lipschitz because belonging to $X$}.
Hence, by Brouwer fixed point theorem, the map $\rho \to\psi(\rho)$ has a fixed point which is the mean field equilibrium.
\end{proof}

\section{Mass-depending bounded controls}\label{sec:4}

In the previous sections we have assumed that the set of admissible values for the controls $u$ was the whole real line 
$\mathbb{R}$, even if, from an optimization argument, the really implemented controls were non-negative and bounded. 
This fact implied that, at least formally, each agent has at disposal any possible values for the control, 
which we recall can be interpreted as scalar velocity, even if the edge is very 
congested. 
From a modeling point of view, this may be not satisfying. 
Hence, here we assume that there is bound on the set of admissible controls, and that such a bound somehow depends on the actual values of the mass concentration $\rho^e$ on the edge $e$, coherently with the feature of our model, where any agent in the edge $e$ at time $t$ suffers the same congestion $\rho^e(t)$. 
Hence, for every edge $e$, we consider a function $U^e:]0,+\infty[\to[0,+\infty[$, such that

i) $U^e$ is continuous and decreasing and strictly positive;

ii) $\lim_{\xi\to0^+}U^e(\xi)=+\infty$, $\lim_{\xi\to+\infty}U^e(\xi)=0$.

\noindent
We then assume that, at any time $t$, an agent in the edge $e$ has at disposal the bounded interval $[0,U^e(\rho^e(t))]$, as admissible values for controls. That is, if in the time interval $[t_1,t_2]$, an agent is in the edge $e$, then it can only use measurable controls such that 
\begin{equation}
\label{eq:constraint}
u(s)\in[0,U^e(\rho^e(s))]\ \mbox{a.e. } s\in[t_1,t_2].
\end{equation}

\noindent
Note that, without loosing generality, we already restrict ourselves to non-negative controls: indeed, also in this case, by an optimization point of view, the use of negative controls (i.e. to move back on the edge) will be certainly not optimal.

We now suppose that the continuous evolution of the mass distribution $t\mapsto\rho(t)$ is given (as in the fixed point procedure). In the previous sections, again by optimization arguments, see \eqref{oc1}--\eqref{eq:V22}, the actual optimization parameter for an agent entering the edge $e$ at time $t$ was just $\tau>t$, the arrival time on the vertex of the edge, and then, when moving was optimal, the optimal control to be implemented was the constant one $u\equiv\ell_e/(\tau-t)$. Hence, for every edge $e$ and every time $t\in[0,T]$, we define
\begin{equation*}
\tau(t,e,\rho^e)=\tau>t\ \mbox{such that } \int_t^\tau U^e(\rho^e(s))ds=\ell_e
\end{equation*}
with the convention that $\tau(t,e,\rho^e)=+\infty$ when such $\tau>t$ does not exist in $[t,T]$. Hence $\tau(t,e,\rho^e)$, when finite, represents the minimal arrival time on $\kappa_e$ for an agent entering the edge  $e$ at time $t$ and using controls satisfying the constraint (\ref{eq:constraint}) in $[t,\tau(t,e,\rho^e)[$, whereas, when it is infinite, it means that there is no possibility to reach $\kappa_e$ by the final time $T$. 

Note that, restricting ourselves to the values $t$ such that $\tau(t,e,\rho^e)<+\infty$, the function $t\mapsto\tau(t,e,\rho^e)$ is strictly increasing. Indeed, if for some $t_1<t_2$ we have $\tau(t_2,e,\rho^e)\le\tau(t_1,e,\rho^e)$, then we would have
\begin{equation*}
\int_{t_1}^{t_2}U^e(\rho^e(s))ds+\int_{\tau(t_2,e,\rho^e)}^{\tau(t_1,e,\rho^e)}U^e(\rho^e(s))ds=0,
\end{equation*}
which is a contradiction due to the strict positivity of $U^e$.

Now, let us note that, even if $\tau\ge\tau(t,e,\rho^e)$, then the corresponding constant velocity $u\equiv\ell_e/(\tau-t)$ of traversing the edge does not necessarily satisfy the constraint (\ref{eq:constraint}). On the other side, we would like to recover, in this constraint case too, many of the results of the previous sections, in particular all the properties of the optimal control (see i)--vi) Section \ref{VfOC}) coming from their constancy when traversing the edge.  To this end, we relax our constrained optimal control problem (with constraint given by (\ref{eq:constraint})) in the following one:

{\it Constraint on the arrival time}: every agent that enters the edge $e$ at time $t\ge0$ optimizes \eqref{oc1}--\eqref{eq:V22} among $\tau\in[\tau(t,e,\rho^e),+\infty]$. That is, it can implement any measurable control (not necessarily satisfying the constraint (\ref{eq:constraint})), provided that it satisfies the constraint on the arrival time on $\kappa_e$: the arrival time must be not less than $\tau(t,e,\rho^e)$.

In order to state such a new mean field game problem with lower bound on the arrival time, instead of starting from  the existence of the function $U$ giving the velocity-constraint (\ref{eq:constraint}), we start from the existence of a given arrival-time-constraint function with suitable properties.

\begin{Assumption}\label{ass:Hsez4}
For every edge $e \in \mc E$ there exists a function
\begin{equation*}
\tau(\cdot,e,\cdot):[0,T]\times C^0([0,T],\mathbb{R}^+)\to[0,+\infty[,\ \ (t,\rho^e)\mapsto\tau(t,e,\rho^e)
\end{equation*}
such that:

a) it is Lipschitz-continuous (with $C^0([0,T],\mathbb{R}^+)$ endowed by the uniform topology);

b) $\tau(t,e,\rho^e)> t\ \forall\ (t,\rho^e)$; 

c) it is strictly increasing in $t$, for every $\rho^e$ fixed;

d) it is strictly increasing in $\rho^e$ for every fixed $t$, that is
\begin{align*}
\rho_1^e\le\rho_2^e\ \mbox{in } [t,\tau(t,e,\rho_1^e)],\ \exists\ s\in[t,\tau(t,e,\rho^e_1)]\ \mbox{such that } & \rho_1^e(s)<\rho_2^e(s) \Longrightarrow \\
& \tau(t,e,\rho^e_1)<\tau(t,e,\rho_2^e).
\end{align*}
\end{Assumption}

Hence, in this setting, the mean field game problem is as the one in the previous sections, with the only difference that in the minimization of the costs \eqref{eqcost}, every agent entering the edge $e$ at time $t\ge0$ implements controls from the set
\begin{equation*}
{\mc U}(t,e,\rho^e)=\left\{u\in L^1(0,T)  : \mbox{ the corresponding arrival time is } \tau\ge\tau(t,e,\rho^e) \right\},
\end{equation*}

\noindent
instead of controls from the whole space $L^1(0,T)$.

In order to apply to this setting all the argumentation and calculations of the previous sections, we have to test the validity of the points i)--vi) of Section \ref{VfOC}, and the Lipschitz continuity of the value functions \eqref{eq:V1}--\eqref{eq:V22} where, in this case, the minimization in $\tau$ are, instead of for $\tau\in]t,T]$, for $\tau\in[\tau(t,e,\rho^e),T]$. In what follows, we tacitly refer to those points.\\

i)--ii) For every $\tau\ge\tau(t,e,\rho^e)$ the constant control $\ell_e/(\tau-t)$ belongs to ${\mathcal U}(t,e,\rho^e)$ with arrival time $\tau$, and hence, for the same reasons it is the minimizing one. Also iii) and iv) come again from the same considerations as in the Section \ref{VfOC}.\\

v) Here, we observe that, since $t\mapsto\tau(t,e,\rho^e)$ is strictly increasing, then, if for $t_1<t_2$ we have the same optimizing arrival time $\tau$, it must be $\tau(t_1,e,\rho^e)<\tau(t_2,e,\rho^e)\le\tau\le T$. Taking $t_1<t'<t_2$, by the points iii), agents starting at time $t'$ must have the same arrival time $\tau$. Hence we get $\tau(t_1,e,\rho^e)<\tau(t',e,\rho^e)<\tau\le T$. If then we assume $\tau<T$, we then get a contradiction because $\tau$, being and interior minimizing point of the costs of agents starting at $t_1$ as well as at $t'$, is a stationary point and we conclude using the first order condition (see \cite{bafama}, where the possible non-differentiability of $V$ is also taken into account).\\

vi) This point is similarly valid in this constrained case.\\

For the Lipschitz continuity of the value functions $V^e_p$ \eqref{eq:V1}--\eqref{eq:V22}, we just observe that now the minimization is for $\tau\in[\tau(t,e,\rho^e),T]$ but, as done in the proof of Proposition \ref{th:valuefun}, the minimizing $\tau$ of the function $\psi^t$ still belongs to $[t+h,T]$.

Finally, we observe that in the definition of the flows \eqref{flusso}, $k$ is defined as a quantity such that to start to traverse the edge at a time after $T-k$ is certainly not convenient. However in that definition it has also the meaning of a (approximately estimated) mean minimal traversing time. Here, in this constrained situation, it would be more precise to take account also of the minimal traversing time due to the constraint $\tau(t,e,\rho^e)$. Hence we define the mean minimal traversing time as
\begin{equation*}
\overline\tau(e,\rho^e)=\frac{1}{T}\int_0^T\left(\tau(s,e,\rho^e)-s\right)ds
\end{equation*}
and replace $k$ in \eqref{flusso} by $\tilde k=\max\{k,\overline\tau(e,\rho^e)\}$ (which is sufficiently less than $T$ if $T$ is large, otherwise we can suitably cut it). Note that, by our hypotheses, the function $\rho^e\mapsto\overline\tau(e,\rho^e)$ is continuous with respect to the uniform convergence and hence we can still apply all the fixed point machinery as in the previous section.

\begin{remark}
	The constrained case here discussed may be also a model to take account for a possible upper bound on the mass because it is concerned with a bound on the admissible velocity, which is decreasing with the mass concentration on the edge. In particular, looking to the function $U$ of the constraint (\ref{eq:constraint}), if $U(\rho)=0$ for $\rho\ge\rho_{\max}$ (the maximal mass), then the only admissible velocity is $u=0$ and so no agents can move: the edge is fully congested. Actually, here we have assumed that $U(\rho)>0$ for all $\rho$, and this fact was useful, for example, to prove that $t\mapsto\tau(t,e,\rho^e)$ is strictly increasing. However, we somehow get that fully congested property when $U(\rho_{\max})$ is sufficiently small in such a way that, if $\rho^e\sim\rho_{\max}$ in the time interval $[t,T]$, then $\tau(t,e,\rho^e)>T$ and so the agents do not move.
\end{remark}

\section{Conclusions}\label{sec:5}

In this paper we have modelled the agents flows over a transportation network via a  mean field game model which also takes into account the agents' preferences about the paths choice. {We proved the existence of  a mean field equilibrium, and also addressed the case where the set of admissible controls depends on the actual congestion of the edge}. 

Future research may be to study the behaviour of the mean field equilibrium by varying the noise to which the information on the congestion is subject and also to compare our mean field model with the Wardrop one, {with also some possible numerical simulations. Also the effects of the only estimated flows assumption (\ref{flusso}) on the discrepancy from a real model is worth analyzing.}

\medskip
\medskip

\end{document}